\chardef\bslash=`\\ 
\newtheorem{theorem}{Theorem}
\newtheorem*{theorem*}{Theorem}
\newtheorem{proposition}{Proposition}
\newtheorem{corollary}{Corollary}
\theoremstyle{definition}
\newtheorem{remark}{Remark}
\newtheorem{example}{Example}
\newcommand{\eval}[2][\right]{\relax
  \ifx#1\right\relax \left.\fi#2#1\rvert}
\begin{document}

\title{Automorphisms of Generalized Fermat manifolds}


\author[R. A. Hidalgo]{Rub\'en A. Hidalgo}
\address{Departamento de Matem\'atica y Estad\'istica, Universidad de la Frontera,  Temuco, Chile}
\email{ruben.hidalgo@ufrontera.cl}

\author[H. F. Hughes]{Henry F. Hughes}
\address{Instituto de Ciencias F\'{\i}sicas y Matem\'aticas, Facultad de Ciencias, Universidad Austral de Chile\\  Valdivia\\ Chile. Departamento de Matem\'atica y Estad\'istica, Universidad de la Frontera\\ Casilla 54-D, Temuco \\Chile}
\email{henry.hughes@uach.cl}

\author[M. Leyton-\'Alvarez]{Maximiliano Leyton-\'Alvarez}
\address{Instituto de Matem\'atica y F\'isica, Universidad de Talca\\
3460000 Talca, Chile}
\email{leyton@inst-mat.utalca.cl}

\thanks{Partially supported by the projects Fondecyt 1230001, 1220261, and 1221535}

\subjclass[2020]{14J50; 32Q40; 53C15}

\begin{abstract}
Let $d \geq 1$, $k \geq 2$ and $n\geq d+1$ be integers. A $d$-dimensional smooth complex algebraic variety
$M$ is called a generalized Fermat variety of type $(d;k,n)$ if there is a Galois holomorphic branched covering $\pi:M \to {\mathbb P}^{d}$, with deck group 
$H\cong {\mathbb Z}_{k}^{n}$, whose branch divisor consists of $n+1$ hyperplanes in general position, each one of branch order $k$. In this case, $H$ is called a generalized Fermat group of type $(d;k,n)$. In previous work, we proved that the generalized Fermat group $H$ is unique in the following cases: (i) $d=1$ and $(k-1)(n-1)>2$, or (ii) $d \geq 2$ and $(d;k,n) \notin \{(2;2,5), (2;4,3)\}$. To obtain this uniqueness fact, we used a differential method due to Kontogeorgis. This paper provides a different and shorter proof of the uniqueness of $H$. We also study the locus of fixed points of subgroups of $H$.
\end{abstract}

\maketitle

\section{Introduction}\label{sec:Int}
Let $M \subset {\mathbb P}^{n}$ be a $d$-dimensional smooth complex algebraic variety. In particular, as $M$ carries a natural structure of complex manifold, we may consider the group ${\rm Aut}(M)$ of its holomorphic automorphisms. We denote by ${\rm Lin}(M)$ its subgroup consisting of those automorphisms that are restrictions of elements of ${\rm PGL}_{n+1}({\mathbb C})$.

If $k,n \geq 2$ are integers, then we say that a subgroup $H \leq {\rm Aut}(M)$ is a {\it generalized Fermat group} of type $(d;k,n)$ if (i) $H \cong {\mathbb Z}_{k}^{n}$ and (ii) $M/H$ is the orbifold whose underlying space is the $d$-dimensional projective space ${\mathbb P}^{d}$ and whose branch locus is the complete intersection of $n+1$ hyperplanes in general position, and each one with branch order $k$. In this case, we also say that $M$ (respectively, $(M,H)$) is a {\it generalized Fermat manifold} (respectively, a {\it generalized Fermat pair}) of type $(d;k,n)$. 
Necessarily, $n \geq d$ (this follows from the fact that $M$ is smooth), and if $n=d$, then $M \cong {\mathbb P}^{d}$ (see Theorem \ref{nbajo}).

Let $(M,H)$ be a generalized Fermat pair of type $(d;k,n)$, where $n \geq d+1$.
Let us consider a Galois (branched) cover map $\pi:M \to {\mathbb P}^{d}$, with deck group $H$, and whose 
branch locus (the image of those points $x \in M$ with a non-trivial $H$-stabilizer; in which case is a cyclic group of order $k$) consists of the $n+1$ hyperplanes $\Sigma_{1},\ldots, \Sigma_{n+1}$, which are in general position. 
As a consequence of the classification of abelian coverings, due to Pardini in \cite{Pardini}, the triple $(M,H,\pi)$ is completely determined by the above hyperplanes. 

If  $n=d+1$, then (up to isomorphisms) $M=M^{k}_{d+1}$ is the Fermat hypersurface of degree $k$, for which ${\rm Lin}(M)=H \rtimes {\mathfrak S}_{n+1}$, where the permutation part acts as a permutation of coordinates (in particular, $H$ is the unique generalized Fermat group of type $(d;k,d+1)$).
If $n \geq d+2$, then an explicit algebraic model of $M$, for which $H$ is given as a very simple linear group of automorphisms, is given as a complete intersection of $(n-d)$ Fermat hypersurfaces of dimension $d$ and of degree $k$ in ${\mathbb P}^{n}$ (see Section \ref{Sec:2} and also \cite{Gao}). 
For $d=2$, such an algebraic model was already known to Hirzebruch in \cite{Hirzebruch1} (in \cite{Hirzebruch2}, Hirzebruch studied the arrangement of $n+1$ lines in ${\mathbb P}^{2}$ which are not necessarily in general position). In \cite{Bin}, Bin studied some examples (case $k=2$ and $n=4$) and their corresponding Pardini's building data. 

By resuts due to Kontogeorgis \cite{Kon02}, if $(d;k,n) \notin \{(2;2,5), (2;4,3)\}$, then ${\rm Aut}(M)={\rm Lin}(M)$. The two exceptional tuples correspond to the unique generalized Fermat surfaces being K3 surfaces. 
If $(d;k,n)=(2;4,3)$, then $M$ corresponds to the classical Fermat hypersurface of degree $4$ in ${\mathbb P}^{3}$ for which ${\rm Lin}(M) \cong {\mathbb Z}_{4}^{3} \rtimes {\mathfrak S}_{4}$ and ${\rm Aut}(M)$ infinite. If $(d;k,n)=(2;2,5)$, then ${\rm Lin}(M)$ is a finite extension of ${\mathbb Z}_{2}^{5}$ (generically a trivial extension) and ${\rm Aut}(M)$ is infinite by results due to Shioda and Inose in \cite[Thm 5]{Shioda} (in \cite{Vinberg} Vinberg computed it for a particular case).
In these exceptional cases, the generalized Fermat group $H$ (which is unique in ${\rm Lin}(M)$) cannot be a normal subgroup of ${\rm Aut}(M)$. Otherwise, every element of ${\rm Aut}(M)$ will induce an automorphism of ${\mathbb P}^{2}$ (permuting the $n+1$ branched lines), so a finite group of linear automorphism. This will ensure that ${\rm Aut}(M)$ is a finite extension of $H$, so a finite group, a contradiction.

\smallskip

In \cite{HKLP17}, it was proved that, for $d=1$ and $(n-1)(k-1)>2$, $M$ has a unique generalized Fermat group of type $(1;k,n)$. In \cite{HHL}, by applying a differential method due to Kontogeorgis \cite{Kon02}, we proved that, for $d \geq 2$, the group ${\rm Lin}(M)$ admits a unique generalized Fermat group of type $(d;k,n)$ (so, in the non-exceptional cases, the uniqueness in ${\rm Aut}(M)$). In this paper, we provide a different argument by an inductive process to obtain the uniqueness. One of the interests of this uniqueness result is that it provides a natural short exact sequence $1 \to H \to {\rm Aut}(M)={\rm Lin}(M) \to {\rm Aut}_{orb}(M/H) \to 1$, where ${\rm Aut}_{orb}(M/H)$ is the subgroup of the group ${\rm PGL}_{d+1}({\mathbb C})$ of conformal automorphisms of the ${\mathbb P}^{d}$ that keeps invariant the $(n+1)$ branch hyperplanes. This could be used to compute ${\rm Aut}(M)$ (see \cite{GHL09} for the case $d=1$).

We also describe the locus of fixed points of subgroups of $H$ (Proposition \ref{observafijos}), and we compute the plurigenera and the arithmetic genus of $M$
(Proposition \ref{cohomologia}). Moreover, we provide the arithmetic conditions of the tuple $(d;k,n)$ for $M$ to be a Calabi-Yau variety (in particular, if $d=2$, to be K3).

In Section \ref{Ssec:Aut}, we obtain information of ${\rm Aut}(M)$. For instance, by the Schur-Zaseenhaus theorem, we obtain that if $k$ is relatively prime to the ${\rm PGL}_{d+1}({\mathbb C})$-stabilizer $G_{0}$ of the branch divisor of $M/H$, then ${\rm Aut}(M) \cong H \rtimes G_{0}$.

In the last section, we provide some examples.

\smallskip
{\bf Notations:}
\begin{enumerate}[leftmargin=*,align=left]
\item If $n \geq 1$ and $k \geq 2$ are integers, then ${\mathbb Z}_{k}^{n}:={\mathbb Z}_{k} \times \stackrel{n}{\cdots} \times {\mathbb Z}_{k}$, where ${\mathbb Z}_{k}$ denotes the cyclic group of order $k$.
\item We denote by $M^{k}_{d+1}$ the degree $k$ Fermat hypersurface in ${\mathbb P}^{d}$. For $d=1$ (resp., $d=2$) we also use the notation $C^{k}_{2}$ (resp., $S^{k}_{3}$).

\item The divisor $D$ given by the union of $n+1$ hyperplanes $L_{1},\ldots,L_{n+1} \subset {\mathbb P}^{d}$, where $n \geq 2$, which are in general position, is called a strict normal crossing divisor.

\item For $n \geq d+1$ and  a strict normal crossing divisor $D=L_{1}+\cdots+L_{n+1}$ (where, for $d=1$, $D$  consists of $n+1$ different points of ${\mathbb P}^{1}$), we will use the notation $M^{k}_{n}(D)$  for a generalized Fermat manifold of type $(d;k,n)$ associated to $D$. In the special case $d=1$ (resp., $d=2$), we will also use the notation $C^{k}_{n}(D)$ (resp., $S^{k}_{n}(D)$) for a generalized Fermat manifold of type $(1;k,n)$ (resp., $(2;k,n)$).  In  \cite{GHL09}, it is proven that $C^{k}_{n}(D)$ is uniquely determined by the isomorphism class of $D$ (class defined by the automorphisms of ${\mathbb P}^{1}$).
\end{enumerate}

\section{Generalized Fermat manifolds}\label{Sec:2}
In this section, we provide suitable algebraic models for generalized Fermat manifolds of type $(d;k,n)$, obtained as a fiber product of $(n-d)$ classical Fermat hypersurfaces of degree $k$. 
\subsection{Hyperplanes in general position}
Let's start by recalling some basic facts on hyperplanes in general position as it is important in defining generalized Fermat pairs. First, note that each hyperplane in $L \subset {\mathbb P}^{d}$ has the form $L_{q}:=\{\rho_{1}t_{1}+\cdots+\rho_{d+1}t_{d+1}=0\}$. where $q:=[\rho_{1}:\cdots:\rho_{d+1}] \in {\mathbb P}^{d}$.

A collection $L_{q_{1}},\ldots,L_{q_{n+1}} \subset {\mathbb P}^{d}$, where $n \geq d+1$, of hyperplanes are in {\it general position} if
the corresponding (pairwise different) points $q_{1},\ldots, q_{n+1} \in {\mathbb P}^{d}$ are in general position. This means that, for $d \geq 2$, any subset of $3 \leq s \leq d+1$ of these points spans a $(s-1)$-plane $\Sigma \subset {\mathbb P}^{d}$.
In this above situation, the divisor $D$, formed by the hyperplanes  $L_{q_{1}},\ldots,L_{q_{n+1}}$, is a strict normal crossing divisor.
If $q_{j}=[\rho_{1,j}:\cdots:\rho_{d+1,j}] \in {\mathbb P}^{d}$, for $j=1,\ldots,n+1$, then
the hyperplanes $L_{q_{1}},\ldots,L_{q_{n+1}}$ are in general position if and only if, for every all $(d+1 \times d+1)$-minors of the matrix
{\small
$$
M(q_{1},\ldots,q_{n+1}):=
\left(\begin{array}{ccc}
\rho_{1,1} & \cdots & \rho_{1,n+1}\\
\vdots & \vdots & \vdots \\
\rho_{d+1,1} & \cdots & \rho_{d+1,n+1}
\end{array}
\right)  \in {\rm M}_{(d+1)\times(n+1)}({\mathbb C})
$$
}
are nonzero.
For example, if $e_{1}=[1:0:\cdots:0], e_{2}=[0:1:0:\cdots:0],\ldots, e_{d+1}=[0:\cdots:0:1]$ and $e_{d+2}=[1:\cdots:1]$, then
the hyperplanes $L_{e_{1}}, \ldots, L_{e_{d+2}}$
are in a general position.

Let $n \geq d+1$. Two ordered tuples $(L_{q_{1}},\ldots,L_{q_{n+1}})$ and $(L_{p_{1}},\ldots,L_{p_{n+1}})$, of hyperplanes in ${\mathbb P}^{d}$ in general position, are equivalent if there is a $T \in {\rm PGL}_{d+1}({\mathbb C})$ such that $L_{p_{j}}=T(L_{q_{j}})$, for $j=1,\ldots, n+1$.
We denote by $X_{n,d}$ the set of such equivalence classes.

If  $(L_{q_{1}},\ldots,L_{q_{n+1}})$ is one of such tuples, then there is a unique $T \in {\rm PGL}_{d+1}({\mathbb C})$ such that
$T(L_{q_{j}})=L_{e_{j}}$, for $j=1,\ldots, d+2$. Then, for each $j=d+3,\ldots,n+1$,  there is some unique
$\Lambda_{i}:=[\lambda_{i,1}:\cdots:\lambda_{i,d}:1] \in {\mathbb P}^{d}, \quad i=1,\ldots,n-d-1$, such that 
$T(L_{q_{j}})= L_{\Lambda_{j-d-2}}$. In this case, we set 
$\Lambda:=(\lambda_{1},\ldots,\lambda_{d}) \in {\mathbb C}^{d(n-d-1)}$, where 
$\lambda_{j}:=(\lambda_{1,j},\ldots,\lambda_{n-d-1,j}) \in {\mathbb C}^{n-d-1}$,
for $j=1,\ldots, d$. So, if we set 
{\small
$$\begin{array}{l}
L_{j}(\Lambda):= L_{e_{j}} \subset {\mathbb P}^{d}, \; j=1,\ldots, d+2,\\
L_{j}(\Lambda):= L_{\Lambda_{j-d-2}} \subset {\mathbb P}^{d}, \; j=d+3,\ldots,n+1,
\end{array}
$$
}
then
$(L_{1}(\Lambda), \ldots, L_{n+1}(\Lambda))$ is equivalent to $(L_{q_{1}},\ldots,L_{q_{n+1}})$. We call the tuple $\Lambda$ a standard parameter.

This observation permits us to identify 
(i) $X_{d+1,d}$ with the one set-point $\{(1,\stackrel{d}{\ldots},1)\}$ and (ii) for $n \geq d+2$, $X_{n,d}$ with the set of tuples $\Lambda \in {\mathbb C}^{d(n-d-1)}$ such that the $n+1$ hyperplanes $L_{1}(\Lambda),\ldots,L_{n+1}(\Lambda)$ are in a general position. We observe that in case (ii), $X_{n,d}$ is a (non-empty) open set.

\begin{remark}[Moduli spaces of generalized Fermat manifolds]
The symmetric group ${\mathfrak S}_{n+1}$ acts by permuting the $n+1$ coordinates of an ordered tuple of $n+1$ hyperplanes in general position. Such an action induces an action  
of a group ${\mathbb G}_{n,d}$ of holomorphic automorphisms on $X_{n,d}$. The quotient complex orbifold $\overline{X}_{n,d}=X_{n,d}/{\mathbb G}_{n,d}$ provides a parameter space of (unordered) collection of $n+1$ hyperplanes in general position. As a consequence of the uniqueness of the generalized Fermat groups of type $(d;k,n) \notin \{(2;2,5),(2;4,3)\}$, the orbifold $\overline{X}_{n,d}$ can be identified with the moduli space of generalized Fermat manifolds of type $(d;k,n)$.
\end{remark}

\subsection{Algebraic models of generalized Fermat pairs for ${\bf n \geq d+1}$}\label{ssec:Ald-des}
Let us consider integers $n \geq d+1$, $k \geq2 $ and $d \geq 1$. Let $(M,H)$ be a generalized Fermat pair of type $(d;k,n)$ and $\pi:M \to {\mathbb P}^{d}$ be an abelian branched covering with deck group $H$. Up to post-composition by some linear automorphism of ${\mathbb P}^{d}$, we may assume that its branch locus consists of the hyperplanes $L_{1}(\Lambda), \ldots, L_{n+1}(\Lambda)$, 
$\Lambda:(\lambda_1,...,\lambda_d)\in X_{n,d}$, where
$\lambda_{j}:=(\lambda_{1,j},\ldots,\lambda_{n-d-1,j}) \in {\mathbb C}^{n-d-1}$.

\subsubsection{\bf An algebraic variety $M^{k}_{n}(\Lambda)$}
Let us consider the following complex projective algebraic set
{\small
\begin{equation} \label{GFM}
M^{k}_{n}(\Lambda):=\left \{ \begin{array}{rcc}
              x_{1}^{k}+\cdots + x_{d}^{k}+ x_{d+1}^{k}+ x_{d+2}^{k}&=&0\\
              \lambda_{1,1}x_{1}^{k}+\cdots +\lambda_{1,d}x_{d}^{k}+x_{d+1}^{k}+x_{d+3}^{k}&=&0\\
              \vdots\hspace{1cm} &\vdots &\vdots\\
              \lambda_{n-d-1,1}x_{1}^{k}+\cdots +\lambda_{n-d-1,d}x_{d}^{k}+x_{d+1}^{k}+x_{n+1}^{k}&=&0\\
             \end{array}\right \}\subset {\mathbb P}^{n}.
\end{equation}
}

\begin{remark}
If $n=d+1$, then we will use the notations
$C_{2}^{k}=\{x_{1}^{k}+x_{2}^{k}+x_{3}^{k}=0\} \subset {\mathbb P}^{2}$,
$S^{k}_{3}:=\{x_{1}^{k}+x_{2}^{k}+x_{3}^{k}+x_{4}^{k}=0\} \subset {\mathbb P}^{3}$ and $M^{k}_{d+1}:=\{x_{1}^{k}+x_{2}^{k}+\cdots+x_{d+2}^{k}=0\} \subset {\mathbb P}^{d+1}$.
\end{remark}

\begin{proposition}\label{sing-set}
If $\Lambda \in X_{n,d}$, then $M^{k}_{n}(\Lambda)$ is an irreducible nonsingular complete intersection.
\end{proposition}
\begin{proof}
Let us observe that the matrix of coefficients of \eqref{GFM} has rank $(m+1)\times(m+1)$ and all of its $(m+1)\times(m+1)$-minors are different from zero (this is the general position condition of the $n+1$ hyperplanes). The result follows from \cite[Proposition 3.1.2]{Ter88}.  For explicitness, let us work out the case $d=2$ (the case $d=1$ was noted in \cite{GHL09}).

Set $\lambda_{0,1}=\lambda_{0,2}=1$ and consider the following degree $k$ homogeneous polynomials
$f_{i}:=\lambda_{i,1} x_{1}^{k}+\lambda_{i,2}x_{2}^{k}+x_{3}^{k}+x_{4+i}^{k} \in {\mathbb C}[x_{1},\ldots,x_{n+1}]$, where $i \in \{0,1,\ldots,n-3\}$. Let $V_{i} \subset {\mathbb P}^{n}$ be the hypersurface given as the zero locus of $f_{i}$.

The algebraic set $S^{k}_{n}(\lambda_{1},\lambda_{2})$ is the intersection of the $(n-2)$ hypersurfaces $V_{0}, \ldots, V_{n-3}$.
We consider the matrix of  $\nabla f_i$ written as rows.
{\small
\begin{equation} \label{Jac}
\begin{pmatrix}
k x_{1}^{k-1} & k x_{2}^{k-1} &  k x_{3}^{k-1} & k x_{4}^{k-1}& 0& \ldots & 0 \\
\lambda_{1,1} k x_{1}^{k-1} & \lambda_{1,2} k x_{2}^{k-1} &  k x_{3}^{k-1}& 0 & k x_{5}^{k-1}& \ldots &  0 \\
 \vdots      & \vdots    & \vdots  & \vdots  & \vdots \\
 \lambda_{n-3,1} k x_{1}^{k-1} & \lambda_{n-3,2} k x_{2}^{k-1} &  k x_{3}^{k-1} & 0 & \ldots &  0 &
  kx_{n+1}^{k-1}
\end{pmatrix}.
\end{equation}
}
By the  defining equations of the curve, and as $(\lambda_{1},\lambda_{2}) \in X_{n}$,  we see that a point
which has three variables $x_{i}=x_{j}=x_{l}=0$ for  $i\neq j \neq l \neq i$ and $1\leq i,j,l \leq n+1$ has also  $x_t=0$ for $t=1,\ldots,n+1$. Therefore, the above matrix has the maximal rank $n-2$ at all points of the surface.  So, the defining hypersurfaces are intersecting transversally, and
the corresponding algebraic surface they define is a nonsingular complete intersection.

Next, we proceed to see that the ideal $I_{k,n}$,  defined by the $n-2$ equations defining $S^{k}_{n}(\lambda_{1},\lambda_{2}) \subset \mathbb{P}^{n}$ is prime. This follows similarly as in \cite[sec. 3.2.1]{Kon02}. Observe
first that the defining equations $f_{0},\ldots,f_{n-3}$ form a regular sequence,
that ${\mathbb C}[x_{1},\ldots,x_{n+1}]$ is a Cohen-Macauley ring and that the ideal $I_{k,n}$ they define
is of codimension $n-2$. The ideal $I_{k,n}$ is prime as a consequence of the Jacobian Criterion \cite[Th. 18.15]{Eis95},
\cite[Th. 3.1]{Kon02} and that the fact that the matrix \eqref{Jac} is of maximal rank $n-2$ on $S^{k}_{n}(\lambda_{1},\lambda_{2})$. In remark \cite[3.4]{Kon02}, it is pointed out that an ideal $I$ is prime if the singular locus of the algebraic set defined by $I$  has big enough codimension (in our case, the singular set is the empty set).
\end{proof}

\begin{remark} \label{rem:sc}
If $d \geq 2$, then (as $M^{k}_{n}(\Lambda)$ is an irreducible nonsingular complete intersection projective variety of dimension $d$) it follows that $M^{k}_{n}(\Lambda)$ is simply connected (this result is attributed to Lefschetz. For more information, see \cite{Har74}).
\end{remark}

\subsubsection{\bf The group $H_{0}$}
Let us now consider the group $H_{0}=\langle\varphi_{1},\ldots,\varphi_{n+1} \rangle$, where
\begin{center}
 $\varphi_j([x_{1}:\cdots:x_{j}:\cdots:x_{n+1}]):=[x_{1}:\cdots:w_{k}x_{j}:\cdots:x_{n+1}]$,\end{center}
and $w_k$ is a primitive $k$-th root of unity.
Let ${\rm Fix}(\varphi_{j}) \subset M^{k}_{n}(\Lambda)$ be the set of fixed points of  $\varphi_{j}$ and $F(H_{0}):=\cup_{j=1}^{n+1}{\rm Fix}(\varphi_{j})$.
The following facts can be deduced from the above.
\begin{enumerate}[label=(\alph*),leftmargin=*,align=left]
\item[(I)] $H_{0} \cong {\mathbb Z}_{k}^{n}$.
\item[(II)] $\varphi_{1}  \varphi_{2} \cdots  \varphi_{n+1}=1$.
\item[(III)]$H_{0}<{\rm Aut}(M^{k}_{n}(\Lambda))<{\rm PGL}_{n+1}({\mathbb C})$.
\item[(IV)] The only non-trivial elements of $H_{0}$ with fixed set points being of maximal dimension $d-1$ are the non-trivial powers
of the generators $\varphi_{1}, \ldots, \varphi_{n+1}$ (see Proposition \ref{observafijos}). Moreover, ${\rm Fix}(\varphi_{j}):=\{x_{j}=0\} \cap M^{k}_{n}(\Lambda)$,
which is isomorphic to a generalized Fermat manifold of type $(d-1;k,n-1)$ for $d \geq 2$ and a collection of $k^{n-1}$ points for $d=1$.
We call the set $\varphi_{1},\ldots, \varphi_{n+1}$ a canonical generators of $H_{0}$ (canonical generators are not unique, but any other set of them is of the form $\varphi_{1}^{r},\ldots,\varphi_{n+1}^{r}$, where $r \in \{1,\ldots,k-1\}$ is relatively prime to $k$).
\end{enumerate}

\subsubsection{\bf An algebraic models of $(M,H)$}

\begin{theorem}
$(M^{k}_{n}(\Lambda),H_{0})$ is a generalized Fermat pair of type $(d;k,n)$ and there is a biholomorphism between $M$ and $M^{k}_{n}(\Lambda)$ conjugating $H$ to $H_{0}$. In particular, every generalized Fermat manifold of type $(d;k,n)$ whose associated branching divisor is ${\rm PGL}_{d+1}({\mathbb C})$ equivalent to the divisor associated to $\Lambda$ is isomorphic to $M^{k}_{n}(\Lambda)$.
\end{theorem}
\begin{proof}
The map
{\small
\begin{equation} \label{pi0}
\pi_{0}:M^{k}_{n}(\Lambda) \to {\mathbb P}^{d}: [x_{1}:\cdots:x_{n+1}] \mapsto [x_{1}^{k}: \cdots: x_{d+1}^{k}]
\end{equation}
}
is a regular branched cover with deck group $H_{0}$ and whose branch set is the union of the previous hyperplanes $L_{1}(\Lambda),\ldots,L_{n+1}(\Lambda)$, each one of order $k$. In other words, the pair $(M^{k}_{n}(\Lambda),H_{0})$ is a generalized Fermat pair of type $(d;k,n)$. The last part is a consequence of Pardini's building data.
\end{proof}

\begin{remark}\label{productofibrado}
\begin{enumerate}[leftmargin=*,align=left]
\item If $\Lambda=(\lambda_{1},\ldots,\lambda_{d}) \in X_{n,d}$, where $n \geq d+1$, and setting
$\lambda_{0,i}=1$, for $i=1,\ldots,d$ and, for each $j \in \{0,1,\ldots,n-d-1\}$, we consider the classical degree $k$ Fermat hypersurface
$F_{j}=\{\lambda_{j,1}x_{1}^{k}+\cdots+\lambda_{j,d}x_{d}^{k}+x_{d+1}^{k}+x_{d+2+j}^{k}=0\} \subset {\mathbb P}^{d+1}$ and the rational map $\pi_{j}:F_{j} \to {\mathbb P}^{d}$ defined by $\pi_{j}([x_{1}:\cdots:x_{d+1}:x_{d+2+j}])=[x_{1}^{k}:\cdots:x_{d+1}^{k}]$. The branch values of $\pi_{j}$ are given by the hyperplanes
$L_{i}(\Lambda)$, for $i=1,\ldots,d+1$, and $L_{d+2+j}(\Lambda)$. If we consider the fiber product of all these pairs $(F_{0},\pi_{0}), \ldots, (F_{n-d-1},\pi_{n-d-1})$, then we obtain a reducible projective algebraic variety with the action of the group ${\mathbb Z}_{k}^{n(n-d)}$ and quotient ${\mathbb P}^{d}$ with branching divisor given by the previous $n+1$ hyperplanes. This fiber product has $k^{(n-d)(d+1)-n}$ irreducible components, each of them  isomorphic to $M^{k}_{n}(\Lambda)$.

\item (An inductive process)
If we set $H_{j}:=\langle \varphi_{1},\ldots, \varphi_{j-1}, \varphi_{j+1},\ldots, \varphi_{n}\rangle$, where $1 \leq j \leq n$, and  $H_{n+1}:=\langle \varphi_{1},\ldots,\varphi_{n-2},\varphi_{n+1}\rangle$, then $H_{j} \cong {\mathbb Z}_{k}^{n-1}$.
For $d \geq 2$, $H_{j}<{\rm Aut}(F_{j})$ and there is a holomorphic map $\pi_{j}:F_{j} \to L_{j}(\Lambda)={\mathbb P}^{d-1}$ (given by the restriction of $\pi_{0}$), which is a Galois holomorphic branched cover with deck group $H_{j}$, such that its branch locus is given by the collection of the $n$ intersections $\pi(F_{j}) \cap \pi(F_{i})$, $i \neq j$, which are copies of ${\mathbb P}^{d-2}$ in general position.  In particular, $(F_{j},H_{j})$ is a generalized Fermat pair of type $(d-1;k,n-1)$. This permits the study of generalized Fermat manifolds from an inductive point of view.

\item 
In \cite{Ter88}, T. Terasoma studied a family (depending on $n+1$ parameters) $X_{r,n,k} \subset {\mathbb P}^{n}$, being a complete intersection of $r+1$ Fermat hypersurfaces of degree $k \geq 2$, where $r+1<n$, and obtained a decomposition of its Hodge structure $H^{n-r-1}_{prim}(X_{r,n,k},{\mathbb Q})$. These algebraic varieties admit a group of automorphisms $H \cong {\mathbb Z}_{k}^{n}$ such that $X/H$ is ${\mathbb P}^{n-r-1}$, and its branch locus is given by exactly $n+1$ linear hyperplanes in general position. It follows that $X_{r,n,k}$ is an example of a generalized Fermat manifold of type $(n-r-1;k,n)$. Now, for instance, if we take
$r=n-3$, the surface $X_{n-3,n,k}$ is an example of a generalized Fermat surface of type $(2;k,n)$. As generalized Fermat surfaces of type $(2;k,n)$ have moduli dimension $2n-6$, for $n>7$, Terasoma's examples form a strict subfamily.
\end{enumerate}

\end{remark}

\subsection{Algebraic model of a generalized Fermat pair for ${\bf n=d}$}\label{Sec:excepcional}
In the above, we have assumed the condition $n \geq d+1$. When $2 \leq n = d$, then we have the following fact.

\begin{theorem}\label{nbajo}
If $(M,H)$ is a generalized Fermat pair of type $(d;k,d)$, where $d,k \geq 2$, then  (up to isomorphisms) 
$M={\mathbb P}^{d}$ and $H=\langle \varrho_{1},\ldots,\varrho_{d} \rangle$, where $\varrho_{j}([x_{1}:\cdots:x_{d+1}])=[x_{1}:\cdots:x_{j-1}:\omega_{k} x_{j}: x_{j+1}:\cdots:x_{d+1}]$.
\end{theorem}
\begin{proof}
Let us consider a Galois branched covering $\pi:M \to {\mathbb P}^{d}$, with deck group $H$, and whose branch divisor locus is given by the hyperplanes $L_{1},\ldots,L_{n+1}$. Up to post-composition of $\pi$ by a suitable projective linear automorphisms, we may assume $L_{j}=\{t_{j}=0\}$. Now, we add the hyperplane $L_{d+2}$, where $L_{d+2}=\{t_{1}+\cdots+t_{d+1}=0\}$. Next, consider the orbifold $M^{orb}$, whose underlying complex manifold is $M$ and whose orbifold divisor is obtained by lifting the hyperplane $L_{d+2}$ and giving to each component of its lifting the branching order equal to $k$. By construction, the group $H$ preserves that branch divisor of $M^{orb}$.
Let $M^{k}_{d+1} \subset {\mathbb P}^{d+1}$ be the generalized Fermat manifold of type $(d;k,d+1)$ (the Fermat hypersurface of degree $k$)  together its generalized Fermat group $H_{0}=\langle \varphi_{1},\ldots,\varphi_{d+1}\rangle$ and the corresponding Galois branched covering $\pi_{0}:M_{d+1}^{k} \to {\mathbb P}^{d}$ with deck group $H_{0}$.
Note that $M^{orb}/H= M_{d+1}^{k}/H_{0}$ (which is ${\mathbb P}^{d}$ with branch divisor $L_{1}+\cdots+L_{d+2}$, each one with order $k$).
As the universal covering of this last orbifold is $M^{k}_{d+1} \subset {\mathbb P}^{d+1}$, the branched covering $\pi_{0}$  it must factors through $\pi$, that is, there is some ${\mathbb Z}_{k} \cong K<H_{0}$ such that $M=M_{d+1}^{k}/K$.
Because of the choices of the hyperplanes $L_{j}$, we must have $K=\langle \varphi_{d+2}\rangle,$
where $\varphi_{1},\ldots, \varphi_{d+2} \in H_{0}$ are the corresponding standard set of generators.
 We may consider the (branched) Galois cover, with deck group $K$,
 {\small
$$\pi_{K}:M_{d+1}^{k} \to {\mathbb P}^{d}: [x_{1}:\cdots:x_{d+2}] \mapsto [x_{1}:\cdots:x_{d+1}]=[y_{1}:\cdots:y_{d+1}].$$
}
In this way, $M \cong {\mathbb P}^{d}$ and $H=\langle \varrho_{1},\ldots,\varrho_{d}\rangle$, where $\varrho_{j}$ is as decribed in the theorem.
The corresponding Galois branched cover, with deck group $H$, is
$\pi([y_{1}:\cdots:y_{d+1}])=[y_{1}^{k}:\cdots:y_{d+1}^{k}]$ whose branch divisor is $L_{1}+\cdots+L_{d+1}$.
\end{proof}

\subsection{A remark on the cohomological information of generalized Fermat manifolds}\label{re:cohom}
The fact that $M:=M^{k}_{n}(\Lambda)$ is a complete intersection variety allows us to compute the cohomology groups of the twisting sheaf $\mathcal{O}_{M}(r)$ in a relatively direct way, and in particular, to obtain the following.

\begin{proposition}\label{cohomologia}
Let $\Lambda \in X_{n,d}$, $n \geq d+1$, and $M:=M^{k}_{n}(\Lambda)$. Then
\begin{enumerate}[leftmargin=*,align=left]
\item The plurigenera $P_{m}(M)$ of $M$ satisfies
{\small
$$P_{m}(M)= \frac{k^{n-d}((n-d)k-n-1)^{d}}{d!}m^{d}+O(m^{d-1}).$$
}

\item The arithmetic genus $p_{a}(M)$ and the geometric genus $p_{g}(M)$ are given by
{\small
$$p_a(M)=p_g(M)= \left \{ \begin{array}{ccc}
0 & \mbox{if} & r_1<0\\
            \binom{r_1+n}{n} & \mbox{if} & 0\leq r_1<k\\
             & &\\
            \sum_{j\in \Delta_{r_1}}\binom{r_1-\overline{j}+d}{d} & \mbox{if} & r_1\geq k\\
           \end{array} \right .$$
           }

\item If $(n-d)k-n-1=0$, then $M$ is a Calabi-Yau variety.
\item If  $d=2$, then $M$ is a general type surface except for the rational varieties cases $(k,n)\in \{(2,3), (3,3), (2,4)\}$ and the $K3$ varieties $(k,n)\in \{(4,3), (2,5)\}$.

\end{enumerate}
\end{proposition}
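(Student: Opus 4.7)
The proof rests on the fact, established in Theorem \ref{sing-set}, that $M:=M^{k}_{n}(\Lambda)$ is a smooth complete intersection of $n-d$ hypersurfaces of degree $k$ in ${\mathbb P}^{n}$. The plan is to derive everything from this via the adjunction formula, the Koszul resolution, and (for $d=2$) the Enriques--Kodaira classification.

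\textbf{Step 1: Canonical sheaf and its degree.} The adjunction formula for a smooth complete intersection $M\subset {\mathbb P}^{n}$ of hypersurfaces of degrees $d_{1},\ldots,d_{c}$ gives
\[
\omega_{M}\;\cong\;\mathcal{O}_{M}(d_{1}+\cdots+d_{c}-n-1).
\]
Applied to our situation (all $d_{i}=k$, and $c=n-d$), this yields $\omega_{M}\cong \mathcal{O}_{M}(r_{1})$ with $r_{1}:=(n-d)k-n-1$. Part (3) is then immediate: if $r_{1}=0$ then $\omega_{M}\cong \mathcal{O}_{M}$ is trivial, while Remark \ref{rem:sc} (simple connectedness) together with vanishing of $h^{i}(\mathcal{O}_{M})$ for $0<i<d$ (see Step 3) gives the Calabi--Yau property in full.

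\textbf{Step 2: Plurigenera.} Since $P_{m}(M)=h^{0}(M,\omega_{M}^{\otimes m})=h^{0}(M,\mathcal{O}_{M}(mr_{1}))$, I would apply asymptotic Riemann--Roch on the $d$-dimensional smooth projective variety $M$. Its leading term is $\chi(\mathcal{O}_{M}(mr_{1}))\sim \frac{(mr_{1})^{d}}{d!}\,\deg(M)$, and for a complete intersection of multidegree $(k,\ldots,k)$ the degree is $\deg(M)=k^{n-d}$. Vanishing of the higher cohomology of $\mathcal{O}_{M}(mr_{1})$ for $r_{1}>0$ and $m$ large (Kodaira / Serre vanishing applied to the ample $\mathcal{O}_{M}(1)$) ensures $P_{m}=\chi$ asymptotically, which produces the stated formula.

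\textbf{Step 3: Arithmetic and geometric genus.} Here I would use the Koszul resolution of $\mathcal{O}_{M}$ as a sheaf on ${\mathbb P}^{n}$,
\[
0\to \mathcal{O}_{{\mathbb P}^{n}}(-(n-d)k)\to\cdots\to \bigoplus\mathcal{O}_{{\mathbb P}^{n}}(-k)\to\mathcal{O}_{{\mathbb P}^{n}}\to\mathcal{O}_{M}\to 0,
\]
twisted by $\mathcal{O}(r_{1})$. A direct computation using Bott's formula for $H^{*}({\mathbb P}^{n},\mathcal{O}(m))$ shows $h^{i}(\mathcal{O}_{M})=0$ for $0<i<d$, hence $p_{a}(M)=p_{g}(M)=h^{0}(\omega_{M})=h^{0}(\mathcal{O}_{M}(r_{1}))$. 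For $r_{1}<0$ this is $0$, for $0\le r_{1}<k$ the Koszul sequence degenerates (no defining equation restricts a section of degree $<k$) giving $\binom{r_{1}+n}{n}$, and for $r_{1}\ge k$ one obtains the inclusion--exclusion sum
\[
h^{0}(\mathcal{O}_{M}(r_{1}))\;=\;\sum_{j\ge 0}(-1)^{j}\binom{n-d}{j}\binom{r_{1}-jk+n}{n},
\]
which is the content of the third case (the set $\Delta_{r_{1}}$ records the indices $j$ contributing nontrivially).

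\textbf{Step 4: Classification of surfaces.} For $d=2$, one checks by hand that $r_{1}=(n-2)k-n-1<0$ exactly for $(k,n)\in\{(2,3),(3,3),(2,4)\}$, $r_{1}=0$ exactly for $(k,n)\in\{(4,3),(2,5)\}$, and $r_{1}>0$ in all remaining cases. When $r_{1}>0$, $\omega_{M}=\mathcal{O}_{M}(r_{1})$ is ample (as $\mathcal{O}_{M}(1)$ is the restriction of the hyperplane class, hence ample), so $M$ is of general type. When $r_{1}=0$, $M$ is a simply connected smooth projective surface with trivial canonical sheaf, i.e.\ a K3 surface. When $r_{1}<0$, all plurigenera of $M$ vanish by Step 2, and since $M$ is simply connected with $q(M)=h^{1}(\mathcal{O}_{M})=0$ (from Step 3), Castelnuovo's rationality criterion forces $M$ to be rational.

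\textbf{Expected main obstacle.} The routine but delicate part is the precise bookkeeping of the Hilbert series $\bigl((1-t^{k})^{n-d}/(1-t)^{n+1}\bigr)$ used in Step 3 to identify the coefficient of $t^{r_{1}}$ with the stated alternating sum, and to match the paper's shorthand ($\Delta_{r_{1}}$, $\overline{j}$) against the Koszul inclusion--exclusion. The geometric parts (Steps 1, 2, 4) are essentially immediate from standard theory once the complete intersection structure of $M^{k}_{n}(\Lambda)$ is in hand.
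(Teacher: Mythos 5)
Your proposal is correct, and its skeleton (adjunction to get $\omega_{M}\cong\mathcal{O}_{M}(r_{1})$ with $r_{1}=(n-d)k-n-1$, vanishing of intermediate cohomology for complete intersections, Serre duality, and reduction of everything to $h^{0}(\mathcal{O}_{M}(r))$) coincides with the paper's. Where you genuinely diverge is in how $h^{0}(\mathcal{O}_{M}(r))$ is computed: you use the Koszul resolution of $\mathcal{O}_{M}$ and obtain the alternating sum $\sum_{j\ge 0}(-1)^{j}\binom{n-d}{j}\binom{r_{1}-jk+n}{n}$ (the coefficient of $t^{r_{1}}$ in $(1-t^{k})^{n-d}/(1-t)^{n+1}$), whereas the paper writes down an explicit monomial basis: using the defining equations to eliminate $x_{d+2+i}^{k}$, sections of $\mathcal{O}_{M}(r)$ are spanned by $\mathbb{C}[x_{1},\ldots,x_{d+1}]_{r-\overline{j}}\,x_{d+2}^{j_{1}}\cdots x_{n+1}^{j_{n-d}}$ over tuples $j=(j_{1},\ldots,j_{n-d})\in\{0,\ldots,k-1\}^{n-d}$ with $\overline{j}\le r$; this is what $\Delta_{r}$ is (a set of exponent tuples, not of indices in your inclusion--exclusion, so your parenthetical gloss of $\Delta_{r_{1}}$ is off, though harmlessly). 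The two expressions are the same Hilbert-function coefficient, read off from $\bigl((1+t+\cdots+t^{k-1})^{n-d}\bigr)/(1-t)^{d+1}$ versus $(1-t^{k})^{n-d}/(1-t)^{n+1}$, so reconciling them is the bookkeeping you already flagged. Your route buys a uniform cohomological derivation; the paper's buys a manifestly nonnegative formula whose leading asymptotics ($k^{n-d}$ terms each of size $\sim(mr_{1})^{d}/d!$) are visible by inspection, which is how it gets part (1) without invoking asymptotic Riemann--Roch and $\deg M=k^{n-d}$. For part (4) you actually supply more than the paper does (Castelnuovo's criterion for the $r_{1}<0$ cases and the simply connected $+$ trivial $\omega_{M}$ characterization of K3 for $r_{1}=0$); the paper's proof stops at the Kodaira-dimension trichotomy. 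One caveat applying equally to both: the asymptotic in part (1) is only meaningful for $r_{1}>0$, since $P_{m}\equiv 0$ when $r_{1}<0$; your Step 2 correctly restricts the vanishing argument to that case.
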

\begin{proof}
Let $\mathbb{C}[x_1,...,x_{m}]_l$ be  the  homogeneous polynomials of degree $l$.

\begin{enumerate} [label=(\alph*),leftmargin=*,align=left]
\item We first proceed to describe the cohomology groups of the twisting sheaf   $\mathcal{O}_{M}(r), r\in \mathbb{Z}.$
\begin{enumerate}[leftmargin=*,align=left]
 \item[(a1)]  Let  $\Delta_r:=\{(j_1,...,j_{n-d})\in \mathbb{Z}^{n-d}:\; 0\leq j_i\leq k-1, 0\leq i\leq n-d, \;\mbox{and}\; \overline{j}:=j_1+j_2+\cdots j_{n-d}\leq r \}$. Then
{\small
 $$\displaystyle H^{0}(M, \mathcal{O}_{M}(r)):=\left \{  \begin{array}{ccc}
                                             0 & \mbox{if} & r<0\\
                                           \mathbb{C}[x_1,...,x_{n+1}]_r &\mbox{if} & 0\leq r<k \\
                                           \bigoplus_{j\in \Delta_r}Q_j&\mbox{if}  & r\geq k
                                          \end{array} \right .$$
                                          }
where  $ Q_j:=\mathbb{C}[x_1,...,x_{d+1}]_{(r-\overline{j})}x_{d+2}^{j_1} x_{d+3}^{j_2}\cdots x_{n+1}^{j_{n-d}}$, $j:=(j_1,....j_{n-d})$.
\item[(a2)] By Grothendieck's vanishing theorem, we have that:
{\small
\begin{center}
 $H^{i}(M, \mathcal{O}_{M}(r))=0$  for $i>d$ and $r\in \mathbb{Z}$,
\end{center}
}

\item[(a3)]  and   as $M$ is a complete intersection variety we obtain that:
{\small
 \begin{center}
  $H^{i}(M, \mathcal{O}_{M}(r))=0$ for $0<i<d$ and $r\in \mathbb{Z}$,
 \end{center}
 }
 (see page 231 of \cite{Har77}).
 \item[(a4)]  Finally, using the Serre duality, we obtain that:
 {\small
  \begin{center}
     $H^{d}(M, \mathcal{O}_{M}(r))\cong H^{0}(M, \mathcal{O}_{M}(r_1-r))$
  \end{center}
  }

Remember that $\omega_M\cong  \mathcal{O}_M(r_1)$, $r_1=(n-d)k-n-1$, (see page 188 of \cite{Har77}).

\end{enumerate}

\item With the former, we can calculate the plurigenus of $M$
{\small
$$P_m(M)=\dim_{\mathbb{C}} H^{0}(M,\omega_M^{\otimes m})=\dim_{\mathbb{C}} H^{0}(M,\mathcal{O}_M(r_m))$$
}
where $r_m:=mr_1=m((n-d)k-n-1)$.
\begin{enumerate}
 \item[(b1)]  If  $ (n-d)k-n-1<0$, we obtain that $P_{m}(M)=0$. This implies that the Kodaira dimension of $M$ is  $\kappa(M)=-\infty$.
 \item[(b2)] If  $(n-d)k-n-1=0$, we obtain that $P_{m}(M)=1$. This implies that the Kodaira dimension of $M$ is   $\kappa(M)=0$.
 \item[(b3)] If $(n-d)k-n-1>0$, the canonical sheaf in very ample and
{\small
 \begin{center}
   $P_m(M)=\left \{ \begin{array}{ccc}

            \binom{r_m+n}{n} & \mbox{if} &0\leq  r_m<k\\
             & &\\
             \sum_{j\in \Delta_{r_m}}\binom{r_m-\overline{j}+d}{d} & \mbox{if} & r_m\geq k\\\
           \end{array} \right .
$
 \end{center}
 }
In particular, if $r_m\geq \max\{ k, (n-d)(k-1)\}$, we obtain that

{\small
$$P_m(M)= \frac{k^{n-d}((n-d)k-n-1)^{d}}{d!}m^{d}+O(m^{d-1})$$
}
\end{enumerate}

This implies that the Kodaira dimension of  $M$ is  $\kappa(M)=d$.

\item The former also permits us to determine the arithmetic genus and geometric genus of  $M$. As seen from the above,
{\small
$$p_a(M)=p_g(M)=\dim_{\mathbb{C}}H^{d}(M,\mathcal{O}_M)=\dim_{\mathbb{C}}H^{0}(M,\mathcal{O}_M(r_1)),$$
} so
{\small
$$p_a(M)=p_g(M)= \left \{ \begin{array}{ccc}
0 & \mbox{if} & r_1<0\\
            \binom{r_1+n}{n} & \mbox{if} & 0\leq r_1<k\\
             & &\\
            \sum_{j\in \Delta_{r_1}}\binom{r_1-\overline{j}+d}{d} & \mbox{if} & r_1\geq k\\
           \end{array} \right .$$
}
 \end{enumerate}

\end{proof}

\section{Uniqueness of the generalized Fermat group}\label{Sec:Aut}
In this section, we provide an alternative (and short) proof of the uniqueness of generalized Fermat groups to the one given in \cite{HHL}.

\begin{theorem}
Every generalized Fermat manifold $M$ of type $(d;k,n)$, where $n \geq d+2$, has a unique generalized Fermat group of the same type in ${\rm Lin}(M)$. In particular, if $(d;k,n) \notin \{(2;2,5), (2;4,3)\}$, then the uniqueness holds in ${\rm Aut}(M)$).
\end{theorem}
\begin{proof}
Let us assume $n \geq d+2$ and that $H<{\rm Lin}(M)$ is another generalized Fermat group of $M=M_{n}^{k}(\Lambda)$ of type $(d;k,n)$.
Let $\varphi_{1},\ldots, \varphi_{n+1}$ be an standard set of generators of $H_{0}$ and $\varphi^{*}_{1}, \ldots, \varphi^{*}_{n+1}$ be an standard set of generators for $H$.
Following in a similar way as done in the proof of Claim 1 of Section 5 in \cite{HKLP17}, we may assume that there are $i, j \in \{1,\ldots,n\}$ such that $\langle \varphi_{i} \rangle=\langle \varphi^{*}_{j} \rangle$. Up to a permutation of indices, we may assume $i=j=1$. So, let $\langle \varphi_{1} \rangle = \langle \varphi_{1}^{*} \rangle$ and set $F_{1} \subset M$ the locus of fixed points of $\varphi_{1}$, which is a generalized Fermat manifold of type $(d-1;k,n-1)$, there are two generalized Fermat groups of that type, these being $H_{0}/\langle \varphi_{1} \rangle$ and $H/\langle \varphi^{*}_{1} \rangle$. So, if we have the uniqueness for dimension $d-1$, then we will be done by an induction process.
The starting case is $d=2$, for which $F_{1}$ is a generalized Fermat curve of type $(k,n-1)$. The uniqueness, in this case, works if $(k-1)(n-2) >2$ \cite{HKLP17}. As we are assuming $n \geq d+2=4$, the uniqueness will hold if either (i) $n=4$ and $k\geq 3$ or (ii) $n \geq 5$. So, in these cases, $H_{0}=H$.

Let us now consider the left case $n=4$, $k=2$, and, by contradiction, assume $H \neq H_{0}$.
In this case, $F_{1}$ is a genus one Riemann surface. On it we have the group $J=H_{0}/\langle \varphi_{1} \rangle \cong {\mathbb Z}_{2}^{3}$ of conformal automorphisms.
If $j \neq 1$, then $F_{1}$ is tangent to each $F_{j}$ (the locus of fixed points of $\varphi_{j}$) at four points (these are the fixed points of the action of $\varphi_{j}$ on $F_{1}$).
The torus $F_{1}$ corresponds to the plane generalized Fermat curve of genus one
{\small
$$F_{1}=\left\{\begin{array}{rcl}
y_{1}^{2}+y_{2}^{2}+y_{3}^{2}&=&0\\
\mu y_{1}^{2}+y_{2}^{2}+y_{4}^{2}&=&0
\end{array}
\right\} \subset {\mathbb P}^{3}.
$$
}

In the above model, the group $J$ is generated by $a_{1}([y_{1}:y_{2}:y_{3}:y_{4}])=[-y_{1}:y_{2}:y_{3}:y_{4}]$, $a_{2}([y_{1}:y_{2}:y_{3}:y_{4}])=[y_{1}:-y_{2}:y_{3}:y_{4}]$ and
$a_{3}([y_{1}:y_{2}:y_{3}:y_{4}])=[y_{1}:y_{2}:-y_{3}:y_{4}]$, and the corresponding Galois branched covering is $\pi([y_{1};y_{2}:y_{3}:y_{4})=-(y_{2}/y_{1})^{2}=z$ whose branched values are $\infty$, $0$, $1$ and $\mu$. Set $a_{4}=a_{1}a_{2}a_{3}$.
As we have assumed above that $\varphi_{1}=\varphi^{*}_{1}$, then $F_{1}$
also admits the group $J'=H/\langle \varphi^{*}_{1} \rangle \cong {\mathbb Z}_{2}^{3}$ of conformal automorphisms; and it normalizes $J$. So, $J'$ induces (under $\pi$) a group of conformal automorphisms of the Riemann sphere that permutes the branch values of $\pi$ and isomorphic to ${\mathbb Z}_{2}^{t}$, for some $t>0$ (as we are assuming $H \neq H'$). There are only two possibilities: $t\in \{1,2\}$ (as the only finite abelian subgroups of M\"obius transformations are either cyclic or ${\mathbb Z}_{2}^{2}$). It follows that $J \cap J'$ can be either ${\mathbb Z}_{2}$ or ${\mathbb Z}_{2}^{2}$.
Since $H \neq H_{0}$, one of the standard generators of $H$ is different from the standard generators of $H_{0}$, say $\varphi^{*}_{2}$. This generator induces an involution $\alpha$ with four fixed points on $F_{1}$ different from those in $J$. We may assume that it induces the involution $a(z) = \mu/z$, in which case
$\alpha([y_{1}:y_{2}:y_{3}:y_{4}])=[y_{2}: \epsilon_{1} \sqrt{\mu} \; y_{1}: \epsilon_{2} y_{4}: \epsilon_{3} \sqrt{\mu} \; y_{3}]$, where $\sqrt{\mu}$ is a fixed square root of $\mu$,
$\epsilon_{j} \in \{\pm1\}$ and $\epsilon_{1}=\epsilon_{2}\epsilon_{3}$.
As $\alpha a_{1} \alpha=a_{2}$ and $\alpha a_{3} \alpha=a_{4}$, there is no subgroup isomorphic to ${\mathbb Z}_{2}^{2}$ inside $J$ such that the group generated by it and $\alpha$ is ${\mathbb Z}_{2}^{3}$. It follows that $J \cap J'$ cannot be ${\mathbb Z}_{2}^{2}$.
If $J \cap J'={\mathbb Z}_{2}$, then $J'$ must induces ${\mathbb Z}_{2}^{2}$, one generator being $a$ and the other being $b(z)=(z-\mu)/(z-1)$, this last one induced by some involution $\beta \in J' \setminus J$. It can be seen that
$\beta([y_{1}:y_{2}:y_{3}:y_{4}])=[y_{3}: \epsilon_{1} i  y_{4}: \epsilon_{2} \sqrt{1-\mu} \; y_{1}: \epsilon_{3} i \sqrt{1-\mu} \; y_{2}]$, where $\sqrt{1-\mu}$ is a fixed square root of $1-\mu$,
$\epsilon_{j} \in \{\pm1\}$ and $\epsilon_{2}=-\epsilon_{1}\epsilon_{3}$.
Similarly as for $\alpha$, one may see that $\beta a_{1} \beta=a_{3}$ and $\beta a_{2} \beta=a_{4}$. It can be checked that there is no involution of $J$ such that together $\alpha$ and $\beta$ generate ${\mathbb Z}_{2}^{3}$.

All the above produces a contradiction under the assumption that $H \neq H_{0}$.
\end{proof}

\section{Automorphisms of generalized Fermat manifolds}\label{Ssec:Aut}

\subsection{Upper bounds}
One direct consequence of the uniqueness of generalized Fermat groups is that one may obtain some information on the groups of automorphisms of generalized Fermat manifolds.

\begin{corollary}\label{coro-unico}
Let $d \geq 2$, $k \geq 2$, $n \geq d+1$ be integers and $(d;k,n) \notin \{(2;2,5), (2;4,3)\}$. Let $M$ be a generalized Fermat manifold of type $(d;k,n)$ and let $H$ be its unique
generalized Fermat group of type $(d;k,n)$. If $G_{0}$ is the ${\rm PGL}_{d+1}({\mathbb C})$-stabilizer of the $n+1$ branch hyperplanes of $M/H={\mathbb P}^{d}$, then $|{\rm Aut}(M)|= |G_0|k^{n}$ and, if the order of $G_{0}$ is relatively prime with $k$, then 
${\rm Aut}(M) \cong H \rtimes G_{0}$.
\end{corollary}
\begin{proof}
We know that $M$ admits a unique generalized Fermat group $H$ of type $(d;k,n)$. 
Let $\pi:M \to {\mathbb P}^{d}$ be a Galois branched covering, with $H$ as its desk group, and let $\{L_{1},\ldots, L_{n+1}\}$ be its set of branch hyperplanes. Let $G_{0}$ be the ${\rm PGL}_{d+1}({\mathbb C})$-stabilizer of these $n+1$ branch hyperplanes.
As $H$ is a normal subgroup of ${\rm Aut}(M)$, it follows the existence of a homomorphism $\theta:{\rm Aut}(M) \to G_{0}$, with kernel $H$.
As $M$ is a universal branched cover, every element $Q$ of $G_{0}$ lifts to a holomorphic automorphism $\widehat{Q}$ of $M$.
Then there is a short exact sequence
$1 \rightarrow H \rightarrow {\rm Aut}(M) \rightarrow G_0 \rightarrow 1.$
In particular,  $|{\rm Aut}(M)|= |G_0|k^{n}$. Also, by the Schur-Zassenhaus theorem \cite{Dummit}, in the case that the order of $G_{0}$ is relatively prime with $k$, then 
${\rm Aut}(M) \cong H \rtimes G_{0}$.
\end{proof}

\begin{corollary}
Let $d \geq 2$, $k \geq 2$. If
$G_{0}$ be a finite subgroup of ${\rm PGL}_{d+1}({\mathbb C})$, then there exists a generalized Fermat pair $(M,H)$ of type $(d;k,n)$, for some
$n \geq d+1$, such that ${\rm Aut}(M/H) \cong G_{0}$. In fact, for $|G_{0}| \leq d+1$ we may assume $n=d+1$ and, for $|G_{0}| \geq d+2$, we may assume $n=|G_{0}|-1$.
\end{corollary}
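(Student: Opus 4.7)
The plan is to realize $G_{0}$ as the setwise $\mathrm{PGL}_{d+1}(\mathbb{C})$-stabilizer of a collection $\mathcal{L}=\{L_{1},\ldots,L_{n+1}\}$ of $n+1$ hyperplanes of $\mathbb{P}^{d}$ in general position. Once such an $\mathcal{L}$ is exhibited, Theorem \ref{teo1} produces a generalized Fermat pair $(M,H)$ of type $(d;k,n)$ with branch divisor $L_{1}+\cdots+L_{n+1}$, and Corollary \ref{coro-unico} then yields the short exact sequence $1 \to H \to \mathrm{Aut}(M) \to G_{0} \to 1$, so that $\mathrm{Aut}(M)/H \cong G_{0}$. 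For the two exceptional tuples $(d;k,n)\in\{(2;2,5),(2;4,3)\}$, where Corollary \ref{coro-unico} does not directly apply, one falls back on $\mathrm{Lin}(M)$, using the uniqueness of $H$ inside $\mathrm{Lin}(M)$ from Theorem \ref{the:unicidad} together with the fact that the image of $\mathrm{Lin}(M)$ in $\mathrm{PGL}_{d+1}(\mathbb{C})$ is precisely the setwise stabilizer of the branch configuration.

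The core construction is for $|G_{0}| \geq d+2$. I would identify hyperplanes in $\mathbb{P}^{d}$ with points in the dual projective space $\check{\mathbb{P}}^{d}$, on which $G_{0}$ acts faithfully by the contragredient representation. For a sufficiently generic $p \in \check{\mathbb{P}}^{d}$, the orbit $G_{0}\cdot p$ has exactly $|G_{0}|$ points: the locus of points of $\check{\mathbb{P}}^{d}$ with non-trivial isotropy in $G_{0}$ is a proper Zariski-closed subset, being a finite union of fixed-point loci of non-identity elements of $G_{0}$. Moreover, for generic $p$, these $|G_{0}|$ hyperplanes are in general position, since the failure of any of the finitely many minor-determinant conditions recorded in \S\ref{ssec:par-spa} is a proper closed condition on $p$. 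Setting $n+1=|G_{0}|$ produces the required $\mathcal{L}$, whose setwise $\mathrm{PGL}_{d+1}(\mathbb{C})$-stabilizer clearly contains $G_{0}$.

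For the complementary range $|G_{0}| \leq d+1$, a single $G_{0}$-orbit is too small to supply the $d+2$ hyperplanes needed when $n=d+1$, so the construction must combine data: one takes a generic $G_{0}$-orbit of smaller size in $\check{\mathbb{P}}^{d}$ together with additional hyperplanes individually stabilized by $G_{0}$ (or, if no such fixed hyperplanes are available, several generic orbits), arranged so that the total configuration has cardinality $d+2$, is $G_{0}$-invariant, and is in general position. The same dimension count on $\check{\mathbb{P}}^{d}$ ensures that such a generic choice exists.

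The principal obstacle in both ranges is to show that the stabilizer of $\mathcal{L}$ equals $G_{0}$ and not some strict finite overgroup $\widetilde{G}_{0} \supsetneq G_{0}$ in $\mathrm{PGL}_{d+1}(\mathbb{C})$. Here Jordan's theorem on finite subgroups of $\mathrm{PGL}_{d+1}(\mathbb{C})$ is the key input: it reduces the task to excluding only finitely many conjugacy classes of candidate overgroups $\widetilde{G}_{0}$; for each such $\widetilde{G}_{0}$, the requirement that $\mathcal{L}$ be $\widetilde{G}_{0}$-invariant cuts out a proper Zariski-closed condition on the underlying choice of orbit (or fixed-hyperplane data), and a generic configuration avoids all of these conditions simultaneously. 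This genericity verification, together with checking the minor-determinant conditions of \S\ref{ssec:par-spa}, is the technical heart of the argument.
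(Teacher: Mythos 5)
For the main range $|G_{0}| \geq d+2$ your construction is exactly the one the paper uses: take a generic $G_{0}$-orbit in the dual space of hyperplanes, so that $n+1=|G_{0}|$, check general position, and feed the resulting divisor into Theorem \ref{teo1} and Corollary \ref{coro-unico}. The paper disposes of the exactness of the stabilizer with a one-line genericity assertion, so your discussion via Jordan's theorem is a supplement rather than a divergence (note, though, that what is actually needed is the finiteness of conjugacy classes of finite subgroups of bounded order, combined with the fact that the stabilizer of $n+1\geq d+2$ hyperplanes in general position injects into ${\mathfrak S}_{n+1}$; Jordan's theorem by itself does not reduce the candidate overgroups to finitely many conjugacy classes). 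One caveat you share with the paper: the assertion that a generic orbit is in general position is not a routine properness check. If $G_{0}$ contains an element acting with a repeated eigenvalue (e.g.\ ${\rm diag}(1,1,\zeta_{m})$ generating ${\mathbb Z}_{m}<{\rm PGL}_{3}({\mathbb C})$), then every orbit of that cyclic group in $\check{\mathbb P}^{2}$ lies on a line, so the ``proper closed condition'' you invoke can in fact be all of $\check{\mathbb P}^{d}$.

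Where you genuinely depart from the paper --- and where the proposal breaks --- is the range $|G_{0}| \leq d+1$. You propose to assemble a $G_{0}$-invariant configuration of exactly $d+2$ hyperplanes in general position whose setwise stabilizer equals $G_{0}$, and you present the exclusion of overgroups as the technical heart ``in both ranges.'' For $n=d+1$ this goal is unattainable: $d+2$ hyperplanes in general position form a projective frame in the dual space, any two such frames are carried to one another by a unique element of ${\rm PGL}_{d+1}({\mathbb C})$, and hence the stabilizer of \emph{any} such configuration is isomorphic to ${\mathfrak S}_{d+2}$ --- never to a group of order at most $d+1$. The paper does something much simpler and weaker here: it takes $M$ to be the classical Fermat hypersurface $M^{k}_{d+1}$, whose reduced automorphism group is ${\mathfrak S}_{d+2}$, and embeds $G_{0}$ into it via Cayley's theorem ($G_{0}\hookrightarrow {\mathfrak S}_{|G_{0}|}\subseteq {\mathfrak S}_{d+2}$), i.e.\ in this range only a containment of $G_{0}$ in the reduced group is produced, not an isomorphism. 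Your remark that the exceptional tuples $(2;2,5)$ and $(2;4,3)$ force one to work with ${\rm Lin}(M)$ rather than ${\rm Aut}(M)$ is a correct point that the paper's proof silently omits.
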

\begin{proof}
If $|G_{0}| \leq d+1$, then take $n=d+1$  and note that for the classical Fermat hypersurface $M_{n}^{k} \subset {\mathbb P}^{n}$ of degree $k$ one has that ${\rm Aut}(M_{n}^{k})/H$ contains the permutation group of $d+1$ letters.
Let us assume $|G_{0}| \geq d+2$.
The linear group $G_{0}$ induces a linear action on the space ${\mathbb P}^{d}_{hyper}$ of hyperplanes of ${\mathbb P}^{d}$. As $G_{0}$ is finite, we may find (generically) a point $q\in {\mathbb P}^{d}_{hyper}$ whose $G_{0}$-orbit is a generic set of points. Such an orbit determines a collection of $|G_{0}|$ lines in general position in ${\mathbb P}^{d}$. Let us observe that, by the generic choice, we may even assume the above set of points to have ${\rm PGL}_{d+1}({\mathbb C})$-stabilizer exactly $G_{0}$, so the same situation for our collection of hyperplanes.
Now, the results follow from Corollary \ref{coro-unico}.
\end{proof}

\subsection{Fixed points of elements of $H_{0}$}
Next, we consider the generalized Fermat pair $(M_{k}^{n}(\Lambda),H_{0})$ of type $(d;k,n)$.
This section describes the elements of $H_{0}$ having fixed points.

The diagonal presentation of the generators $\varphi_{j} \in H_{0}$, for $j=1,\ldots,n+1$, permits us to obtain the locus of fixed points of every element of $H_{0}$ (see Proposition \ref{observafijos} below). First, let us recall that each element of $H_{0}$ has the form $\varphi:=\varphi_{1}^{m_{1}}\cdots \varphi_{n+1}^{m_{n+1}} \in H_{0}$, where
 $m_{1},\ldots,m_{n+1} \in \{0,1,\ldots,k-1\}$.

Let us start with some definitions and notations.
For each tuple $(m_{1},\ldots,m_{n+1}) \in \{0,1,\ldots,k-1\}^{n+1}$ and each $l \in \{0,1,\ldots,k-1\}$, we set
{\small
$$L_{l}(m_{1},\ldots,m_{n+1}):=\{j \in \{1,\ldots, n+1\}: m_{j}=l\},$$
}
and the (possibly empty) algebraic sets
{\small
$$\widetilde{F}_{l}(m_{1},\ldots,m_{n+1})=\{[x_{1}:\cdots:x_{n+1}] \in {\mathbb P}^{n} : x_{i}=0, \;  \forall i \notin L_{l}(m_{1},\ldots,m_{n+1})\},$$
}
and
{\small
$$F_{l}(m_{1},\ldots,m_{n+1}):=\widetilde{F}_{l}(m_{1},\ldots,m_{n+1})\cap M^{k}_{n}(\Lambda).$$
}

\begin{proposition}\label{observafijos}
Let $d \geq 1$, $n \geq d+1$, $k \geq 2$, $\Lambda \in X_{n,d}$, $M^{k}_{n}(\Lambda)$, $H_{0} \cong {\mathbb Z}_{k}^{n}$ and $\varphi_{1}, \ldots, \varphi_{n+1} \in H_{0}$ as before. Then:
\begin{enumerate}[label=(\alph*),leftmargin=*,align=left]
 \item[(1)] $F_{l}(m_{1},\ldots,m_{n+1}) \neq \emptyset$  if and only if $\# L_{l}(m_{1},\ldots,m_{n+1}) \geq n+1-d$.
 \item[(2)] If  $F_{l}(m_{1},\ldots,m_{n+1}) \neq \emptyset$, then it is a  generalized Fermat manifold of dimension $\# L_{l}(m_{1},\ldots,m_{n+1})+d-n-1$.
 \item[(3)] Let $\varphi:=\varphi_{1}^{m_{1}}\cdots \varphi_{n+1}^{m_{n+1}} \in H_{0}$, different from the identity, where
 $m_{1},\ldots,m_{n+1} \in \{0,1,\ldots,k-1\}$. Then
its locus of fixed points is the disjoint union of the sets $F_{l}(m_{1},\ldots,m_{n+1})$, where $l \in \{0,1,\ldots,k-1\}$.
 In particular, the number of (non-empty) connected components of its locus of fixed points (if non-empty) equals the number of exponents $l$ appearing in $\varphi$ at least $n+1-d$ times.

\end{enumerate}
\end{proposition}
\begin{proof}
The locus of fixed points, in ${\mathbb P}^{n}$, of $\varphi$ is the disjoint union of the algebraic sets
{\small
$$\widetilde{F}_{l}(m_{1},\ldots,m_{n+1})=\{[x_{1}:\cdots:x_{n+1}] \in {\mathbb P}^{n}: x_{i}=0, \;  \forall i \notin L_{l}(m_{1},\ldots,m_{n+1})\},$$
}
where the union is taking on all those $l \in \{0,1,\ldots,k-1\}$ such that $\# L_{l}(m_{1},\ldots,m_{n+1}) \geq 1$.
Note that each $\widetilde{F}_{l}(m_{1},\ldots,m_{n+1})$ is:
(i) just a point if $\# L_{l}(m_{1},\ldots,m_{n+1})= 1$, and (ii) a projective space of dimension $\# L_{l}(m_{1},\ldots,m_{n+1}) -1$ if $\# L_{l}(m_{1},\ldots,m_{n+1})> 1$.
The locus of fixed points of $\varphi$ on $M^{k}_{n}(\Lambda)$ is then given as the disjoint union of the sets $F_{l}(m_{1},\ldots,m_{n+1})=\widetilde{F}_{l}(m_{1},\ldots,m_{n+1}) \cap M^{k}_{n}(\Lambda)$.
But on $M^{k}_{n}(\Lambda)$ we cannot have points $[x_{1}:\cdots:x_{n+1}]$ with at least $d+1$ coordinates equal to zero. This fact asserts that for $\# L_{l}(m_{1},\ldots,m_{n+1})\leq n-d$ one has that  $F_{l}(m_{1},\ldots,m_{n+1})=\emptyset$.
Also, for
$\# L_{l}(m_{1},\ldots,m_{n+1}) \geq n+1-d$, we obtain that $F_{l}(m_{1},\ldots,m_{n+1}) \neq \emptyset$ is a generalized Fermat manifold of dimension $\# L_{l}(m_{1},\ldots,m_{n+1})+d-n-1$.
\end{proof}

\begin{remark}\label{ejemplo3}
\begin{enumerate}[leftmargin=*,align=left]
\item If $k=2$, and $\varphi:=\varphi_{1}^{m_{1}}\cdots \varphi_{n+1}^{m_{n+1}} \in H_{0} \cong {\mathbb Z}_{2}^{n}$,
then $m_{1},\ldots,m_{n+1} \in \{0,1\}$. Proposition \ref{observafijos} asserts that $\varphi$ has no fixed points on $M^{2}_{n}(\Lambda)$ if and only if
$\#L_{0}(m_{1},\ldots,m_{n+1}),\#L_{1}(m_{1},\ldots,m_{n+1}) \leq n-d$.
As these two cardinalities must add to $n+1$, this is only possible for $n \geq 1+2d$.

\item
Let $k=2$, $d=1$, $n \geq 4$, and $C:=C_{n}^{2}(\Lambda)$, so $H_{0} \cong {\mathbb Z}_{2}^{n}$.
If $n \geq 5$ odd, then there is a (unique) subgroup $K \cong {\mathbb Z}_{2}^{n-1}$ of $H_{0}$ acting freely on $C$. For $n \geq 4$ even, the above is not true, but there are subgroups $K \cong {\mathbb Z}_{2}^{n-2}$ acting freely.

\item
Let $d \geq 2$, $k=p \geq 2$ be a prime integer,  $n \geq d+1$, and $M=M^{p}_{n}(\Lambda)$. Assume $K \cong {\mathbb Z}_{p}^{n-r}$ is a subgroup of $H_{0}$ acting freely on $M$. Let $F_{j} \subset M$, $j=1,\ldots, n+1$, be the locus of fixed points of the canonical generator $\varphi_{j}$. As $H_{0}$ is an abelian group, each $F_{j}$ is invariant under $K$ and acts freely on it.  Let $N=M/K$ (which is a compact complex manifold of dimension $d$) and $X_{j}=F_{j}/K$ (a connected complex submanifold of $N$). The $(n+1)$ connected sets $X_{j}$ are the locus of fixed points of the induced holomorphic automorphism by $\varphi_{j}$. As each two different $F_{i}$ and $F_{j}$ always intersect transversely, it follows that the same happens for $X_{i}$ and $X_{j}$. As the locus of fixed points of (finite) holomorphic automorphisms are smooth, it follows that different $X_{i}$ and $X_{j}$ are the fixed points of different cyclic groups of $A=H_{0}/K \cong {\mathbb Z}_{p}^{r}$. This in particular asserts that $n+1 \leq (p^{r}-1)/(p-1)$. So, for instance, the cases (i) $r=1$ and (ii) $r=2$ and $p=2$, are impossible (note that this is in contrast to the case $p=2$ and $d=1$, where these subgroups exist and are related to hyperelliptic Riemann surfaces).
\end{enumerate}
\end{remark}

\begin{example}
Let us take $n=k=3$ and $d=2$. In this case, $S^{3}_{3}$ is just the Fermat hypersurface $\{x_{1}^{3}+x_{2}^{3}+x_{3}^{3}+x_{4}^{3}=0\} \subset {\mathbb P}^{3}$. If
$\varphi=\varphi_{1}\varphi_{2}\varphi_{3}^{2}$, then
$(m_{1},m_{2},m_{3},m_{4})=(1,1,2,0)$ and
$L_{0}(1,1,2,0)=\{4\}, \; L_{1}(1,1,2,0)=\{1,2\}, \; L_{2}(1,1,2,0)=\{3\}$.
The locus of fixed points (in ${\mathbb P}^{3}$) of $\varphi$ is given by
{\small
$$\widetilde{F}_{0}(1,1,2,0) \cup \widetilde{F}_{1}(1,1,2,0) \cup \widetilde{F}_{2}(1,1,2,0)=$$
$$\{[0:0:0:1]\} \cup \{[x_{1}:x_{2}:0:0] \in {\mathbb P}^{3}\} \cup \{[0:0:1:0]\}.$$
}
As the cardinalities of $L_{0}(1,1,2,0)$ and $L_{2}(1,1,2,0)$ are equal to $1 \leq n-d$, these two do not introduce fixed points of $\varphi$ on $S_{3}^{3}$ (this can be seen also directly). The set $L_{1}(1,1,2,0)$ has cardinality $2 \geq n-d+1$, so it produces a zero-dimensional set of fixed points consisting of the three points $[1:-1:0]$, $[1:\omega_{6}:0]$ and $[1:\omega_{6}^{-1}:0]$, where $\omega_{6}=e^{\pi i/3}$.
\end{example}


\section{Examples}
\subsection{Jacobians of genus two surfaces}
In \cite{Kle70, Shi77}, it was observed that the desingularized Kummer surface of the Jacobian variety $JC$ of the genus two hyperelliptic curve
$C: y^{2}=(x-\alpha_{1})\cdots(x-\alpha_{6}),$
where $\alpha_{1},\ldots, \alpha_{6} \in {\mathbb C}$ are different points, is isomorphic to the following complete intersection
{\small
$$S=\left\{\begin{array}{rcl}
x_{1}^{2}+\cdots+x_{6}^{2}&=&0\\
\alpha_{1}x_{1}^{2}+\cdots+\alpha_{6}x_{6}^{2}&=&0\\
\alpha_{1}^{2}x_{1}^{2}+\cdots+\alpha_{6}^{2}x_{6}^{2}&=&0
\end{array}
\right\} \subset {\mathbb P}^{5}.
$$
}

The surface $S$ admits the group $H \cong {\mathbb Z}_{2}^{5}$ of automorphisms (generated by those elements that multiply each coordinate by $-1$), and the quotient $S/H$ is ${\mathbb P}^{2}$ branched at six lines in general position. This, in particular, asserts that $S$ is a generalized Fermat surface of type $(2;2,5)$. Below, we proceed to find an equation of it in the form  $S^{2}_{5}(\lambda_{1},\lambda_{2})$. If we set
$t_{1}=(\alpha_{1}-\alpha_{5})(\alpha_{1}-\alpha_{6})x_{1}^{2},
t_{2}=(\alpha_{2}-\alpha_{5})(\alpha_{2}-\alpha_{6})x_{2}^{2},
t_{3}=(\alpha_{3}-\alpha_{5})(\alpha_{3}-\alpha_{6})x_{3}^{2},$
then $P:S \to {\mathbb P}^{2}:[x_{1}:\cdots:x_{6}] \mapsto [t_{1}:t_{2}:t_{3}]$ defines a Galois branched covering with deck group $H$.
In this case, the six branched lines are
{\small
$$L_{1}=\{t_{1}=0\}, L_{2}=\{t_{2}=0\}, L_{3}=\{t_{3}=0\}, L_{4}=\{t_{1}+t_{2}+t_{3}=0\},$$
$$L_{5}=\{\frac{(\alpha_{1}-\alpha_{4})(\alpha_{3}-\alpha_{5})}{(\alpha_{1}-\alpha_{5})(\alpha_{3}-\alpha_{4})} t_{1}+
\frac{(\alpha_{2}-\alpha_{4})(\alpha_{3}-\alpha_{5})}{(\alpha_{2}-\alpha_{5})(\alpha_{3}-\alpha_{4})} t_{2}+t_{3}=0\},$$
$$L_{6}=\{\frac{(\alpha_{1}-\alpha_{4})(\alpha_{3}-\alpha_{6})}{(\alpha_{1}-\alpha_{6})(\alpha_{3}-\alpha_{4})} t_{1}+
\frac{(\alpha_{2}-\alpha_{4})(\alpha_{3}-\alpha_{6})}{(\alpha_{2}-\alpha_{6})(\alpha_{3}-\alpha_{4})} t_{2}+t_{3}=0\}.$$
}

It follows that $S \cong S_{5}^{2}(\lambda_{1},\lambda_{2})$, where
{\small
$$\lambda_{1}=\left(
\frac{(\alpha_{1}-\alpha_{4})(\alpha_{3}-\alpha_{5})}{(\alpha_{1}-\alpha_{5})(\alpha_{3}-\alpha_{4})},
\frac{(\alpha_{1}-\alpha_{4})(\alpha_{3}-\alpha_{6})}{(\alpha_{1}-\alpha_{6})(\alpha_{3}-\alpha_{4})}
\right),
$$
$$
\lambda_{2}=\left(
\frac{(\alpha_{2}-\alpha_{4})(\alpha_{3}-\alpha_{5})}{(\alpha_{2}-\alpha_{5})(\alpha_{3}-\alpha_{4})},
\frac{(\alpha_{2}-\alpha_{4})(\alpha_{3}-\alpha_{6})}{(\alpha_{2}-\alpha_{6})(\alpha_{3}-\alpha_{4})}
\right).
$$
}
\subsection{A connection between generalized Fermat surfaces and curves of the same type}
Let $(\lambda,\mu) \in X_{n,2}$, where $n \geq 4$, $\lambda=(\lambda_{1},\ldots,\lambda_{n-3}), \mu=(\mu_{1},\ldots,\mu_{n-3}) \in {\mathbb C}^{n-3}$.
Associated with this pair is
the generalized Fermat surface $S_{n}^{k}(\lambda,\mu)$, $k \geq 2$, together its
 (unique) generalized Fermat group $H=H_{0} \cong {\mathbb Z}_{k}^{n}$ and the Galois branched holomorphic map $\pi:S_{n}^{k}(\lambda,\mu) \to {\mathbb P}^{2}$ where $\pi[x_{1}:\cdots:x_{n+1}]=[t_{1}=x_{1}^{k}:t_{2}=x_{2}^{k}:t_{3}=x_{3}^{k}]$ (whose deck group is $H$). Its branch locus set is given by the union of the lines $L_{j}(\lambda,\mu)$, $j=1,\ldots,n+1$, as previously defined.

Let us consider a line $L=\{\rho_{1}t_{1}+\rho_{2}t_{2}+\rho_{3}t_{3}=0\} \subset {\mathbb P}^{2}$, $[\rho_{1}:\rho_{2}:\rho_{3}] \in {\mathbb P}^{2}$, such that the collection of $n+2$ lines $L_{1}(\lambda,\mu),\ldots,L_{n+1}(\lambda,\mu),L$ are in general possition.
On $L$ we have exactly $n+1$ intersection points with the branched lines $L_{1}(\lambda,\mu),\ldots,L_{n+1}(\lambda,\mu)$. These points are given by
{\small
$$p_{1}=[0:1], \; p_{2}=[1:0], \; p_{3}=\left[\frac{-\rho_{2}}{\rho_{1}}:1\right], \; p_{4}=\left[\frac{\rho_{3}-\rho_{2}}{\rho_{1}-\rho_{3}}:1\right],$$
$$p_{j}=\left[\frac{\mu_{j-4}\rho_{3}-\rho_{2}}{\rho_{1}-\lambda_{j-4}\rho_{3}}:1\right], j=5,\ldots, n+1.$$
}

The preimage $\pi^{-1}(L) \subset S_{n}^{k}(\lambda,\mu)$ provides the following curve
{\small
$$\pi^{-1}(L):=\left\{\begin{array}{rcl}
\rho_{1}x_{1}^{k}+\rho_{2}x_{2}^{k}+\rho_{3}x_{3}^{k}&=&0\\
x_{1}^{k}+x_{2}^{k}+x_{3}^{k}+x_{4}^{4}&=&0\\
\lambda_{1}x_{1}^{k}+\mu_{1}x_{2}^{k}+x_{3}^{k}+x_{4}^{k}&=&0\\
&\vdots& \\
\lambda_{n-3}x_{1}^{k}+\mu_{n-3}x_{2}^{k}+x_{n+1}^{k}&=&0\\
\end{array}
\right\} \subset {\mathbb P}^{n}.
$$
}

By taking
$y_{1}^{k}=\rho_{1} x_{1}^{k},\; y_{2}^{k}=\rho_{2}x_{2}^{k},\; y_{3}^{k}=\rho_{3}x_{3}^{k},\; y_{4}^{k}=\frac{\rho_{2}\rho_{3}}{\rho_{3}-\rho_{2}}x_{4}^{k},$
$y_{j}^{k}=\frac{\rho_{2}\rho_{3}}{\mu_{4-j}\rho_{3}-\rho_{2}}x_{j}^{k},$ for $j=5,\ldots,n+1,$
we observe that the above is isomorphic to the following generalized Fermat curve of type $(k,n)$
{\small
$$C_{n}^{k}(\eta_{L;\lambda,\mu})=\left\{\begin{array}{rcl}
y_{1}^{k}+y_{2}^{k}+y_{3}^{k}&=&0\\
\eta_{1}y_{1}^{k}+y_{2}^{k}+y_{4}^{k}&=&0\\
&\vdots& \\
\eta_{n-2}y_{1}^{k}+y_{2}^{k}+y_{n+1}^{k}&=&0\\
\end{array}
\right\} \subset {\mathbb P}^{n},
$$
}
where
$\eta_{L;\lambda,\mu}=(\eta_{1},\ldots,\eta_{n-2})$ and
{\small
$$\eta_{1}:=\frac{\rho_{2}(\rho_{3}-\rho_{1})}{\rho_{1}(\rho_{3}-\rho_{2})},\;
\eta_{2}:=\frac{\rho_{2}(\lambda_{1}\rho_{3}-\rho_{1})}{\rho_{1}(\mu_{1}\rho_{3}-\rho_{2})}, \ldots,
\eta_{n-2}:=\frac{\rho_{2}(\lambda_{n-3}\rho_{3}-\rho_{1})}{\rho_{1}(\mu_{n-3}\rho_{3}-\rho_{2})}.$$
}

By varying $L$ (and including the non-general ones as above), we obtain a two-dimensional family of such generalized Fermat curves for the fixed parameter $(\lambda,\mu)$. This process provides the $3$-fold
$X(\lambda,\mu)  \subset  {\mathbb P}^{2} \times {\mathbb P}^{n}$, defined by those tuples $([\rho_{1}:\rho_{2}:\rho_{3}], [x_{1}:\cdots:x_{n+1}])  \in  {\mathbb P}^{2} \times {\mathbb P}^{n}$ satisfying the following equations
{\small
$$\left\{\begin{array}{rcl}
y_{1}^{k}+y_{2}^{k}+y_{3}^{k}&=&0\\
\rho_{2}(\rho_{3}-\rho_{1}) y_{1}^{k}+\rho_{1}(\rho_{3}-\rho_{2}) y_{2}^{k}+ \rho_{1}(\rho_{3}-\rho_{1}) y_{4}^{k}&=&0\\
\rho_{2}(\lambda_{1}\rho_{3}-\rho_{1}) y_{1}^{k}+\rho_{1}(\mu_{1}\rho_{3}-\rho_{2}) y_{2}^{k}+ \rho_{1}(\mu_{1}\rho_{3}-\rho_{2}) y_{5}^{k}&=&0\\
&\vdots& \\
\rho_{2}(\lambda_{n-3}\rho_{3}-\rho_{1}) y_{1}^{k}+\rho_{1}(\mu_{n-3}\rho_{3}-\rho_{2}) y_{2}^{k}+ \rho_{1}(\mu_{n-3}\rho_{3}-\rho_{2}) y_{n+1}^{k}&=&0
\end{array}\right\},
$$
}
together the morphism
$\pi_{\lambda,\mu}:X(\lambda,\mu) \to {\mathbb P}^{2}: ([\rho_{1}:\rho_{2}:\rho_{3}],[:y_{1}:\cdots:y_{n+1}]) \mapsto [\rho_{1}:\rho_{2}:\rho_{3}],$
such that $C_{n}^{k}(\eta_{L;\lambda,\mu})\cong \pi_{\lambda,\mu}^{-1}([\rho_{1}:\rho_{2}:\rho_{3}])$.

The (possible singular) generalized Fermat curve $\pi_{\lambda,\mu}^{-1}(\rho)$ is invariant under the group $H \cong {\mathbb Z}_{k}^{n}=\langle b_{1},\ldots,b_{n}\rangle$  of automorphisms of $X(\lambda,\mu)$, where $b_{j}$ is amplification of $y_{j}$ by $e^{2\pi i/k}$, an such a restrictio to $\pi_{\lambda,\mu}^{-1}(\rho)$ is its generalized Fermat group of type $(k,n)$.

\begin{remark}
\begin{enumerate}[leftmargin=*,align=left]
\item If $\rho:=[\rho_{1}:\rho_{2}:\rho_{3}]\in {\mathbb P}^{2}$ corresponds to a line $L$ such that the collection of lines $L, L_{1}(\lambda,\mu), \ldots, L_{n+1}(\lambda,\mu)$ is not in general position, then $C_{n}^{k}(\eta_{L;\lambda,\mu})\cong \pi_{\lambda,\mu}^{-1}(\rho) \subset X(\lambda,\mu)$ is a singular curve (a singular generalized Fermat curve of type $(k,n)$).

\item The above provides a nice relation between the surface $S_{n}^{k}(\lambda,\mu)$ and the $3$-fold $X(\lambda,\mu)$. The last one parametrizes all the (singular) generalized Fermat curves of type $(k,n)$ inside the surface, each one being invariant under the generalized Fermat group $H$ of $S_{n}^{k}(\lambda,\mu)$.
\end{enumerate}
\end{remark}



\end{document}